\numberwithin{equation}{section}
\numberwithin{figure}{section}
\theoremstyle{plain}
  \theoremstyle{definition}
  \theoremstyle{plain}
\newtheorem{lem}{Lemma}
\DeclareRobustCommand*\cal{\@fontswitch\relax\mathcal}
  \providecommand{\definitionname}{Definition}
\providecommand{\theoremname}{Theorem}
\begin{document}

\title{Pattern Recognition in a Ring of Delayed Phase Oscillators}

\author{Jan~Philipp~Pade,
        Serhiy~Yanchuk
        and~Liang~Zhao}
\thanks{This work was financially supported by the DFG in the framework of the International
Research Training Group (IRTG) 1740.}
\thanks{J. P. Pade and S. Yanchuk are with the Humboldt-University of Berlin, Institute of
Mathematics, Unter den Linden 6, D-10099 Berlin, Germany.}
\thanks{Z. Liang is with the University of Sao Paulo, Department of Computing and Mathematics,
Av. Bandeirantes, 3900, DCM-FFCLRP-USP, 14040-901, Ribeirao Preto - SP, Brazil.}

\begin{abstract}
We show that a ring of phase oscillators coupled with transmission delays
can be used as a pattern recognition system.
The introduced model encodes patterns as stable periodic orbits.
We present a detailed analysis of the underlying dynamics. In particular, 
we show that the system possesses a multitude of periodic solutions, prove stability
results and present a bifurcation analysis. Furthermore, we show successful
recognition results using artificial patterns and speech recordings.
\end{abstract}

\maketitle

\section{Introduction}
 
The ability to recognize encoded patterns is a very general and omnipresent
skill in humans and animals. Here, the term pattern can refer to a
structure of any nature activating a sensory perception. However,
recognition of visual and sonic patterns are the most common applications.
Developing artificial pattern recognition systems as well as investigating actual natural mechanisms of pattern recognition
in humans and animals has been a scientific challenge for decades
\cite{Malsburg1981,Grosberg1987,Kohonen2001,Wang2005a,Silva2012,Liang2013a}.
The most famous dynamical model is probably the one developed by Hopfield
in 1982 \cite{Hopfield1982}. It is a recurrent artificial neural
network consisting of coupled discrete elements. In contrast to feedforward
networks the underlying graph might have loops leading to more complex
dynamics and multistability, which in turn is used to store several
patterns in the network. Recently, there has been much effort in developing
more realistic models of human pattern recognition \cite{Borisyuk2013,Yamaguchi2003,Hopfield2009}.
It is evident that the pattern recognition task is closely related
to the memory function, and there is a strong evidence that memory
and memory recall are related to synchronization of neuronal ensembles
\cite{Jermakowicz2007}. On the other hand, in absence of stimuli
parts of the brain are known to be in a desynchronized state \cite{Deco2011}.
Consequently, models under consideration mostly consist of coupled
nonlinear dynamical systems that may possess complex dynamics \cite{Breve2009,Zhao2006}.

Delay coupled networks is another class of systems which attracted
much attention recently. Even a system consisting of a single dynamical
node with delayed feedback can be used effectively for information
processing purposes as it is evidenced in \cite{Appeltant2011,Larger2012}.
In \cite{Izhikevich2006} it is shown that a spiking network with
delayed connections can exhibit reproducible time-locked but not synchronous
firing patterns and the number of such patterns exceeds the number
of neurons in the network by far. Indeed, there is an increasing evidence
that the brain's memory and retrieval functions are closely connected
to spike timing \cite{Hasselmo2010}. 

Here, we present an evidence that pattern recognition tasks can be
performed by a system consisting of a single loop of delay coupled
phase oscillators. In contrast to the Hopfield model, patterns in
this model are not stored as fixed points but as periodic orbits.
In this way, time dimension is introduced in the information processing
device, which is biologically plausible. We also mention that the
simple ring structure makes it accessible to analytic investigations
and, at the same time, it serves as a model for central pattern generators
\cite{Golubitsky1999,strelkowa2011,Takamatsu2001,vishwanathan2011}.
The mechanism that we employ here is based on general properties of
rings of time-delayed systems with different connection delays, described
in \cite{Popovych2011,Yanchuk2011}. The delay transformation introduced
therein is used for analyzing the appearance of different patterns
in rings (as well as more complex networks \cite{Luecken2013b}) with
nonidentical delays. 

The article is structured as follows. In section \ref{sec:Synchronous-Motions}
we analyze the model equations with homogeneous delay. In particular,
we study coexisting periodic solutions in such a system and their
number and stability. Section \ref{sec:The-PR-Mechanism} introduces
the pattern recognition mechanism. In section \ref{sec:Numerical-Results}
we apply the model to various signals ranging from simple artificial
ones to speech and present numerical results. Section \ref{sec:Discussion}
concludes with a discussion and an outlook on various open questions.

\section{Synchronous Motions In Rings Of Phase Oscillators\label{sec:Synchronous-Motions}}

In this section we introduce the dynamical system used as a pattern
recognition device and present some useful results concerning its
dynamics. To our knowledge these results are new.
We consider a delayed, unidirectional ring of phase oscillators

\begin{equation}
\dot{x}_{j}\left(t\right)=\omega+\kappa\sin\left(x_{j+1}\left(t-\tau\right)-x_{j}\left(t\right)\right)\label{eq:PhaseOscillCont}
\end{equation}
where $x_{j}\in 2\pi\mathbb{R}/\mathbb{Z}$ and all the oscillators have the same individual frequency
$\omega$. The coupling between the oscillators is of strength $\kappa>0$ and delayed
by a time delay $\tau>0$. Provided with a global coupling structure these equations
are well known as Kuramoto model \cite{Kuramoto1984}. A lot of research
has been done on the synchronization properties of Kuramoto oscillators
with instantaneous coupling and related bifurcation scenarios since
the 80s \cite{Acebron2005}. Similarly, delayed Kuramoto oscillators
are well investigated \cite{Yeung1999,Lee2009,Chen2008}, whereas
there are fewer results on phase oscillators with a local coupling
structure, neither with instantaneous \cite{Rogge2004,Tilles2011,Tilles2013} nor delayed
coupling \cite{Earl2003}.

As our aim is to construct/memorize and recognize patterns by means
of synchronous states, we first have to assure the existence of the
synchronous solutions. We remark that by introducing new variables
$y_{j}=x_{j}-\omega$ and rescaling the time $t\mapsto\kappa t$ system
(\ref{eq:PhaseOscillCont}) transforms to $\dot{y}_{j}\left(t\right)=\sin\left(y_{j+1}\left(t-\tilde{\tau}\right)-y_{j}\left(t\right)\right)$.
However, we will consider (\ref{eq:PhaseOscillCont}), since it does
not make the analysis more complicated and still keeps the coupling
strength and frequency explicitly.

\subsection{Existence of synchronous periodic solutions\label{sub:Existence-of-sync-sol}}

Synchronous periodic solutions of (\ref{eq:PhaseOscillCont}) are
of the form
\begin{equation}
x_{j}\left(t\right)=x_{s}(t)=\Omega t\label{eq:PerSol}
\end{equation}
for some mean frequency $\Omega\in\mathbb{R}$. Substituting this
solution in system (\ref{eq:PhaseOscillCont}) yields the transcendental
equation for $\Omega$
\begin{equation}
\Omega=\omega-\kappa\sin\left(\Omega\tau\right).\label{eq:meanFrequency}
\end{equation}
Now, introducing the variable $z=\Omega\tau$ equation (\ref{eq:meanFrequency})
transforms to
\begin{equation}
\frac{z}{\kappa\tau}=\frac{\omega}{\kappa}-\sin(z)\label{eq:MeanFreqTransf}
\end{equation}
That is, we look for intersections of a straight line through the
origin with slope $\frac{1}{\kappa\tau}$ and a shifted sine, see
Fig. \ref{fig:PerSol} (a).

\begin{figure}
\includegraphics[width=3.5in]{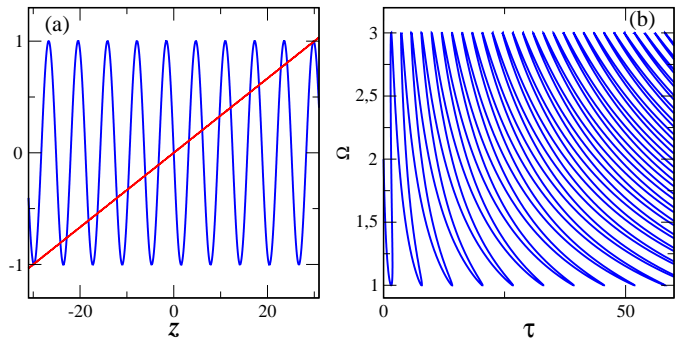}\caption{\label{fig:PerSol}
(a) Graphical representation of solutions to Eq.~(\ref{eq:MeanFreqTransf}).
The intersection points determine frequencies of synchronous solutions
$\Omega$, $z=\Omega\tau$. Parameters: $\omega=0$, $\tau=10$, and
$\kappa=3$. (b)~Frequency of synchronous solutions versus delay
$\tau$, given by Eq. (\ref{eq:Parametr_Freq}). Parameters: $\omega=2$
and $\kappa=1$.}
\end{figure}

In order to visualize how multiple synchronous solutions (\ref{eq:PerSol})
with different frequencies appear for different delay times $\tau$,
let us represent the solutions of (\ref{eq:meanFrequency}) in the
following parametric form
\begin{equation}
\begin{aligned}\tau\left(s\right) & =\frac{s}{\omega+\kappa\sin\left(-s\right)}\\
\Omega\left(s\right) & =\frac{s}{\tau\left(s\right)}
\end{aligned}
\label{eq:Parametr_Freq}
\end{equation}
So the same branch of periodic solutions reappears slightly inclined
for larger values of $\tau$, see Fig. \ref{fig:PerSol} (b). Both
figures show that indeed, either increasing $\tau$ or $\kappa$ leads
to an increase of solutions of the form (\ref{eq:PerSol}). Actually,
it is known that the number of coexisting periodic solutions in systems
with time delay $\tau$ grows at least linearly with $\tau$ \cite{Yanchuk2009}.
In our case, using simple geometrical arguments, one can show that
the number of such solutions is approximately $2\kappa\tau/\pi$.
A proof together with the stability analysis is given in the following
section.

\subsection{Stability of synchronous solutions}

The main result of this section is, that half of the above mentioned
$2\kappa\tau/\pi$ synchronous solutions are linearly stable, whereas
the other half is unstable. To determine the linear stability consider
the linearization of equation (\ref{eq:PhaseOscillCont}) along a
synchronous solution of the form (\ref{eq:PerSol})
\[
\dot{\xi}\left(t\right)=-K\xi\left(t\right)+KG\xi\left(t-\tau\right),
\]
where $K=\kappa\cos\left(\Omega\tau\right)$ and $G=\{g_{ij}\}$ is
the adjacency matrix of the unidirectional ring coupling, i.e. $g_{ij}=1$
if $j=i+1\,\mbox{mod}\, N$ and $g_{ij}=0$ otherwise. Here $N$ is
the number of oscillators in the ring. Thus the linear stability is
given by the distribution of zeros $\mu$ of the following characteristic
equation 
\begin{eqnarray*}
F\left(\mu\right) & = & \det\left(-\mu\mbox{Id}-K\mbox{Id}+Ke^{-\mu\tau}G\right)\\
 & = & -\left(\mu+K\right)^{N}+K^{N}e^{-N\mu\tau}.
\end{eqnarray*}
The solutions $\mu$ of this equation are the characteristic exponents
(eigenvalues) whose real parts determine the stability. The obtained
characteristic equation can be factorized to the set of $N$ more
simple equations:
\begin{equation}
0=\mu+K-Ke^{-\mu\tau}e_{n},\quad n=0,\dots,N-1,\label{eq:CharEq2}
\end{equation}
where $e_{n}=\exp\left[i2\pi n/N\right]$ are $N$-th roots of unity.
Now we can prove the following lemma.
\begin{lem}
Let $x\left(t\right)=\Omega t$ be a synchronous solution of (\ref{eq:PhaseOscillCont}).
The solution is linearly stable iff the following condition holds
\[
\cos\left(\Omega\tau\right)>0.
\]
Furthermore, if $\cos\left(\Omega\tau\right)<0$, the following upper
bound for the real parts of the characteristic exponents $\mu$ holds
true 
\[
\Re\left(\mu\right)\leq-2\kappa\cos\left(\Omega\tau\right).
\]
 \end{lem}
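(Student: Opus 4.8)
The plan is to work directly from the factored characteristic equations (\ref{eq:CharEq2}) and to extract everything from a single ``modulus identity''. Writing $\mu = a + ib$ with $a,b\in\mathbb{R}$ and using $|e_n| = 1$, taking the modulus of $\mu + K = Ke_n e^{-\mu\tau}$ gives
\[
(a+K)^2 + b^2 = K^2 e^{-2a\tau}.
\]
First I would record two consequences that get used repeatedly. Setting $a = 0$ forces $b^2 = 0$, so the only possible root on the imaginary axis is $\mu = 0$; and substituting $\mu = 0$ into (\ref{eq:CharEq2}) gives $K(1 - e_n) = 0$, which holds precisely when $n = 0$. Hence, for every $K \neq 0$, the imaginary axis carries no characteristic root except the trivial exponent $\mu = 0$ coming from the $n=0$ factor (the phase-shift direction of the periodic orbit).

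The two easy halves then follow from the same identity. For stability, assume $K = \kappa\cos(\Omega\tau) > 0$. If a root had $a > 0$, then $(a+K)^2 > K^2 \ge K^2 e^{-2a\tau}$, while the left-hand side of the identity is at least $(a+K)^2$, a contradiction; so no root lies in the open right half-plane, and by the previous paragraph the only imaginary-axis root is the trivial $\mu = 0$. This gives linear stability. For the a priori bound, assume $K < 0$; dropping $b^2 \ge 0$ yields $|a+K| \le |K|e^{-a\tau}$, and for any root with $a \ge 0$ we have $e^{-a\tau}\le 1$, hence $|a+K|\le|K|$, which with $K<0$ forces $a \le 2|K| = -2\kappa\cos(\Omega\tau)$; for $a<0$ the inequality is trivial. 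Thus $\Re(\mu) \le -2\kappa\cos(\Omega\tau)$.

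The substantial part is the converse: $K<0$ implies instability. Here I would first locate, for each fixed $n \neq 0$, the branch of (\ref{eq:CharEq2}) that passes through the origin as $K \to 0$. A regular perturbation expansion $\mu = c_1 K + O(K^2)$ gives $c_1 = e_n - 1$, so for small $|K|$ this root is $\mu_n \approx (e_n - 1)K$ with $\Re(\mu_n) \approx |K|\,(1 - \cos(2\pi n/N)) > 0$; every nontrivial factor therefore already produces an unstable root for $K$ slightly negative. To promote this to all $K < 0$ I would run a continuation/counting argument. The modulus identity confines every right-half-plane root to the compact box $0 \le a \le -2K$, $|b| \le |K|$ (the latter from $b^2 \le K^2 e^{-2a\tau} \le K^2$), so each factor has only finitely many roots with $a \ge 0$, and by continuity of the roots in $K$ this number can change only when a root crosses the imaginary axis. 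Since no factor with $n \neq 0$ has an imaginary-axis root for any $K \neq 0$, the count of unstable roots of each such factor is constant on $K \in (-\infty,0)$ and hence remains at least one throughout, giving instability for every $K<0$ (for a ring with $N \ge 2$).

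I expect the continuation step to be the main obstacle: one must justify continuous dependence of the characteristic roots on $K$ together with the absence of roots escaping to or entering from infinity within the right half-plane. The compact confinement box supplied by the modulus identity is exactly what makes this rigorous, since it rules out any loss of roots at infinity and reduces the bookkeeping to imaginary-axis crossings, which have already been excluded.
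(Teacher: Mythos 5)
Your proof is correct, but it is not the paper's proof; the difference is concentrated in the ``only if'' direction. The paper starts from the same factorization (\ref{eq:CharEq2}) and, writing $\mu=x+iy$ and splitting into real and imaginary parts, runs exactly the sign argument your modulus identity encodes: for $K=\kappa\cos(\Omega\tau)>0$ no root can have $x>0$, and for $K<0$ any root with $x>0$ obeys $x\le-2K$. So your first two steps are the paper's argument in equivalent form, with the bonus that the identity $(a+K)^2+b^2=K^2e^{-2a\tau}$ also shows the imaginary axis carries no nontrivial root when $K\neq 0$ --- a fact the paper only extracts later, in the bifurcation subsection, from $F'(0)=-NK^{N-1}(K\tau+1)$. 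Where you genuinely depart is necessity: the paper does not prove that stability forces $K>0$, it cites Earl and Strogatz \cite{Earl2003} for that implication, whereas you prove it. Your expansion $\mu_n(K)=(e_n-1)K+O(K^2)$ shows each factor with $n\neq 0$ sheds a root into the open right half-plane as $K$ turns negative, and the confinement box $0\le a\le -2K$, $|b|\le|K|$ makes the continuation argument on $(-\infty,0)$ rigorous by ruling out roots entering or leaving through infinity, while your imaginary-axis fact rules out crossings through the axis; this is a standard and sound homotopy argument for delay characteristic equations. What each approach buys: the paper's is shorter but rests on an external stability criterion; yours is self-contained. Two omissions you share with the paper rather than introduce: neither argument treats the degenerate value $K=0$ (where all $N$ exponents collapse to $\mu=0$ and asymptotic stability fails), and neither notes that for $K>0$ the trivial root $\mu=0$ is simple (immediate, since the $n=0$ factor has derivative $1+K\tau>0$ there and the factors with $n\neq 0$ do not vanish at $0$), which is what lets the spectral picture yield orbital stability.
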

\begin{proof}
Splitting equation (\ref{eq:CharEq2}) into real and imaginary parts
($\mu=x+iy$) yields
\begin{eqnarray}
0 & = & x+K\left(1-e^{-x\tau}\cos\left(\frac{2\pi n}{N}-y\tau\right)\right),\label{eq:xeq}\\
0 & = & y-Ke^{-x\tau}\sin\left(\frac{2\pi n}{N}-y\tau\right).\nonumber 
\end{eqnarray}
Consider the two cases $K>0$ and $K<0$:\\
i) Case $K>0$. For $x>0$ we have $x+K\left(1-e^{-x\tau}\cos\left(\frac{2\pi n}{N}-y\tau\right)\right)>0$
which contradicts equation (\ref{eq:xeq}). Consequently, equation
(\ref{eq:xeq}) can only be fulfilled for $x<0$. It is shown in \cite{Earl2003}
that $K>0$ is also a necessary condition for stability which concludes the proof
of the first part of the lemma.\\
ii) Case $K<0$. If we have $x\leq0$ the upper bound is automatically
valid. So suppose we have a solution $\left(x,y\right)$ with $x>0$.
Then the equation (\ref{eq:xeq}) implies
\[
0<-K\left(1-e^{-x\tau}\cos\left(\frac{2\pi n}{N}-y\tau\right)\right)<-2K,
\]
so for $x>-2K$ we would have $x+K\left(1-e^{-x\tau}\cos\left(\frac{2\pi n}{N}-y\tau\right)\right)>0$
which means that indeed, the real part $x$ of $\mu$ can be bounded
as follows 
\[
x\leq-2K=-2\kappa\cos\left(\Omega\tau\right).
\]

\end{proof}
Using the condition for stability from lemma 1 we obtain estimations
for the number of coexistent stable and unstable periodic orbits.
\begin{lem}
Let $2\tau\kappa>\pi$. Then there are at least $\frac{\kappa\tau}{\pi}-\frac{1}{2}$
stable and unstable periodic synchronous solutions of equation (\ref{eq:PhaseOscillCont}),
respectively.\end{lem}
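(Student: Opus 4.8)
The plan is to reduce the counting to the elementary geometry already set up for Eq.~(\ref{eq:MeanFreqTransf}) and to combine it with the stability criterion of the previous lemma. Writing $z=\Omega\tau$, the synchronous frequencies are exactly the zeros of
\[
g(z)=\frac{z}{\kappa\tau}-\frac{\omega}{\kappa}+\sin z ,
\]
i.e.\ the intersections of the line $L(z)=z/(\kappa\tau)$ with the shifted sine $S(z)=\omega/\kappa-\sin z$. Since $S'(z)=-\cos z$, the stability condition $\cos(\Omega\tau)>0$ from the previous lemma says precisely that a solution is stable iff it sits on a \emph{descending} branch of $S$ (where $S'<0$) and unstable iff it sits on an \emph{ascending} branch. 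Thus I would count descending-branch intersections to bound the stable solutions and ascending-branch intersections to bound the unstable ones.

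First I would localize all intersections. Because $L$ is increasing with $L(\omega\tau-\kappa\tau)=\omega/\kappa-1$ and $L(\omega\tau+\kappa\tau)=\omega/\kappa+1$, while $S$ ranges only in $[\omega/\kappa-1,\omega/\kappa+1]$, every zero of $g$ lies in the window $a<z<b$ with $a=\omega\tau-\kappa\tau$, $b=\omega\tau+\kappa\tau$, an interval of length $2\kappa\tau$. The maxima of $S$ sit at $\mu_i=-\pi/2+2i\pi$ (height $\omega/\kappa+1$) and the minima at $\nu_i=\pi/2+2i\pi$ (height $\omega/\kappa-1$), spaced $2\pi$ apart. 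On each descending branch $[\mu_i,\nu_i]$ the function $g$ is strictly increasing, since there $g'=1/(\kappa\tau)+\cos z>0$, so it has a zero iff $g(\mu_i)<0<g(\nu_i)$, which simplifies to $2i\pi\in(a-\pi/2,\,b+\pi/2)$. Counting multiples of $2\pi$ in this open interval of length $2\kappa\tau+\pi$ gives at least $\lceil \kappa\tau/\pi+1/2\rceil-1\ge \kappa\tau/\pi-1/2$ stable solutions, which is exactly the claimed bound; the hypothesis $2\kappa\tau>\pi$ makes the interval longer than $2\pi$, so this count is positive.

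For the unstable solutions I would run the mirror argument on the ascending branches $[\nu_i,\mu_{i+1}]$ with $\mu_{i+1}=\nu_i+\pi$, where $S$ sweeps the band upward, so that the intermediate value theorem again produces one intersection per branch whose endpoints straddle the band. The delicate point, and the main obstacle, is the exact constant: on an ascending branch $g$ is no longer monotone, because $g'=1/(\kappa\tau)+\cos z$ changes sign, so the crude endpoint test only yields $\kappa\tau/\pi-3/2$ and misses the extra sign changes that $g$ makes near the boundary $z\approx b$. To recover the sharp $\kappa\tau/\pi-1/2$ I would exploit the global sign information $g(a)\le 0\le g(b)$ together with the alternation of up- and down-crossings of $g$: every down-crossing is forced to satisfy $\cos z<-1/(\kappa\tau)<0$, hence is unstable, and between two consecutive stable (necessarily up-) crossings there must lie such a down-crossing. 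Tracking these boundary crossings carefully, rather than relying on monotonicity branch by branch, is exactly where the bookkeeping must be done with care in order to land on the symmetric estimate for both the stable and the unstable orbits.
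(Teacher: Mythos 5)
Your count of the \emph{stable} solutions is correct and is, up to the change of variables $z=\Omega\tau$, the same argument as the paper's: your descending branches $[\mu_i,\nu_i]$ are exactly the paper's intervals $I_l$, monotonicity of $g$ there plus the endpoint signs gives one zero per branch, and your condition $2i\pi\in\left(a-\pi/2,\,b+\pi/2\right)$ is the correct (intersection-type) counting condition -- in fact slightly cleaner than the paper's literal requirement $I_l\subset(\omega-\kappa,\omega+\kappa)$, which by itself would only certify $\kappa\tau/\pi-3/2$ branches. So the stable half of your proposal is complete and rigorous.

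The \emph{unstable} half is a genuine gap: you obtain only $\kappa\tau/\pi-3/2$ from the endpoint test and then defer the ``bookkeeping'' that would upgrade it to $\kappa\tau/\pi-1/2$, so the proof is unfinished. You should know, however, that no bookkeeping can close this gap, because the unstable half of the lemma is false as stated. The alternation argument you sketch gives precisely (number of unstable) $\ge$ (number of stable) $-\,1$, hence $\ge\kappa\tau/\pi-3/2$, and this is sharp: take $\omega=0$, $\kappa=1$, $\tau=4.5$, so $2\tau\kappa>\pi$. Roots of $g(z)=z/4.5+\sin z$ other than $z=0$ would (say for $z>0$, by oddness) require $\sin z<0$, hence $z>\pi$, and also $z\le\kappa\tau=4.5$; but on $(\pi,4.5]$ the function $z+4.5\sin z$ attains its minimum at $z_*=2\pi-\arccos(-2/9)\approx 4.49$, with value $z_*-\sqrt{4.5^2-1}\approx 0.10>0$. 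So $\Omega=0$ is the \emph{only} synchronous solution, and it is stable since $\cos 0>0$: there are no unstable solutions at all, while the lemma demands at least $4.5/\pi-1/2\approx 0.93$, i.e.\ at least one. The paper's own proof breaks down at exactly the point you flagged -- its one-line claim that ``by the same argument'' each $J_l$ contributes a zero and that ``$m$ has the same bounds as $M$'' ignores that on the ascending branches the sine takes the unfavorable boundary values, so the endpoint signs hold for roughly one fewer branch. Your instinct that this step is delicate is therefore pointing at an error in the paper, not at a deficiency you could have repaired; the honest conclusion is $\kappa\tau/\pi-1/2$ stable and $\kappa\tau/\pi-3/2$ unstable solutions.
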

\begin{proof}
Finding periodic solutions is equivalent to finding zeros of the function
$f\left(\Omega\right)=\Omega-\omega+\kappa\sin\left(\Omega\tau\right)$.
We have seen in the last lemma, that a solution with frequency $\Omega$
is stable if $\cos\left(\Omega\tau\right)>0$. This is the case for
$\Omega\tau\in\left(-\frac{\pi}{2},\frac{\pi}{2}\right)+2\pi l$.
Motivated by this we define the disjoint intervals 
\begin{eqnarray*}
I_{l} & = & \left(-\frac{\pi}{2\tau},\frac{\pi}{2\tau}\right)+\frac{2\pi l}{\tau}\qquad l\in\mathbb{Z},\\
J_{l} & = & \left(\frac{\pi}{2\tau},\frac{3\pi}{2\tau}\right)+\frac{2\pi l}{\tau}\qquad l\in\mathbb{Z},
\end{eqnarray*}
for which $\cos\left(\Omega\tau\right)>0$ and $\cos\left(\Omega\tau\right)<0$,
respectively. We first remark that we have to fulfill $\Omega\in\left(\omega-\kappa,\omega+\kappa\right)$
(see equation (\ref{eq:meanFrequency})). A straightforward calculation
shows that the number $M$ of intervals $I_{l}$ lying in $\left(\omega-\kappa,\omega+\kappa\right)$
satisfies
\[
\frac{\kappa\tau}{\pi}-\frac{1}{2}\leq M\leq\frac{\kappa\tau}{\pi}+\frac{1}{2}.
\]
Now in $I_{l}$ we have
\[
f'\left(\Omega\right)=1+\kappa\tau\cos\left(\Omega\tau\right)>0
\]
and further $f\left(-\frac{\pi}{2\tau}+\frac{2\pi l}{\tau}\right)\leq0\leq f\left(\frac{\pi}{2\tau}+\frac{2\pi l}{\tau}\right)$
for all $I_{l}\subset\left(\omega-\kappa,\omega+\kappa\right)$. So
$f$ has exactly one zero in each $I_{l}$ by monotonicity. By the
same argument there has to be an odd number of zeros in each $J_{l}$.
By the shape of $f$ we conclude that it has one zero in each $J_{l}$
as well. So the total number of zeros $m$ in intervals $J_{l}$ has
the same bounds as $M$.
\end{proof}

\subsection{Bifurcations of synchronous solutions}

In subsection \ref{sub:Existence-of-sync-sol} we have seen that synchronous
solutions undergo bifurcations as either the delay time $\tau$ or
the coupling strength $\kappa$ is varied. In this section we give
some insight into these bifurcations. A solution (\ref{eq:PerSol})
can change its stability when one of its characteristic exponents
crosses the imaginary axis $\mu=iy$, $y\in\mathbb{R}$. Substituting
$\mu=iy$ into (\ref{eq:CharEq2}) and splitting the obtained equation
into real and imaginary parts yields
\begin{equation}
\begin{aligned}0 & =K\left(1-\cos\left(\frac{2\pi n}{N}-y\tau\right)\right),\\
0 & =y-K\sin\left(\frac{2\pi n}{N}-y\tau\right).
\end{aligned}
\label{eq:iy}
\end{equation}
The first equation in (\ref{eq:iy}) is fulfilled if either $K=0$
or $\frac{2\pi n}{N}-y\tau=2\pi l$ for some $l\in\mathbb{Z}$. Plugged
into the second equation both conditions yield $y=0$. Thus, Hopf
bifurcations with $y\ne0$ cannot occur in the model. The only possible
destabilization corresponds to eigenvalues crossing the imaginary
axis with vanishing imaginary part $y$. On the other hand, every
periodic solution has a trivial characteristic exponent $\mu=0$ corresponding
to perturbations in the direction of the periodic motion. So in order
to look for a change of stability we have to find non-simple zeros
of the characteristic function. Such non-simple zeros are possible
if $F'\left(0\right)=-NK^{N-1}\left(K\tau+1\right)=0$. This is fulfilled
if either one of the following conditions holds
\begin{equation}
\begin{aligned}K & =0,\\
\tau K & =-1.
\end{aligned}
\label{eq:perbif}
\end{equation}
The following lemma describes the type of bifurcations taking place
if the conditions (\ref{eq:perbif}) hold, see also Fig. \ref{fig:BifDiag}.
We remind that $K=\kappa\cos(\Omega\tau)$.
\begin{lem}
Let $x_{\tau}\left(t\right)=\Omega\left(\tau\right)t$ be a branch
of synchronous solutions of equation (\ref{eq:PhaseOscillCont}) with
$\kappa\neq0$. Varying $\tau$ it undergoes a transcritical bifurcation
at $K=0$. If $\omega\neq\pi l\kappa$ for any $l\in2\mathbb{Z}+1$ it undergoes
a fold bifurcation at $K=-1/\tau$ with $K$ defined as above.
\end{lem}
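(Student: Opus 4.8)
The plan is to treat the two bifurcation conditions in (\ref{eq:perbif}) separately, analyzing the branch of synchronous solutions parametrized by $\tau$ via the frequency map $\Omega(\tau)$ determined implicitly by $f(\Omega,\tau)=\Omega-\omega+\kappa\sin(\Omega\tau)=0$. The key observation established earlier is that Hopf bifurcations with $y\neq 0$ are excluded, and the only way stability changes is through the characteristic exponent $\mu=0$ becoming a \emph{non-simple} (double) zero, which happens exactly when $F'(0)=-NK^{N-1}(K\tau+1)=0$. So the whole analysis reduces to understanding the degenerate zero $\mu=0$ of the characteristic function at the two candidate loci $K=0$ and $\tau K=-1$, and identifying the normal form of the bifurcation of periodic orbits there.

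\medskip

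\noindent\textbf{Transcritical bifurcation at $K=0$.} First I would note that $K=\kappa\cos(\Omega\tau)=0$ occurs precisely on the boundary between the intervals $I_l$ and $J_l$ of Lemma 2, i.e.\ where a stable and an unstable branch meet. The natural strategy is to verify that at $K=0$ the characteristic function has a double zero at $\mu=0$ (since $F(0)=-K^N+K^N=0$ always, and $F'(0)=0$ when $K=0$, because $N\geq 2$ forces $K^{N-1}=0$), so that two characteristic exponents collide at the origin. To distinguish transcritical from fold, I would reduce the one-parameter family of equilibria $\Omega(\tau)$ (after factoring out the trivial phase-shift direction giving the always-present $\mu=0$) and examine how the nontrivial zero crosses the imaginary axis. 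The decisive computation is the sign of $f'(\Omega)=1+\kappa\tau\cos(\Omega\tau)=1+\tau K$: on $I_l$ this is positive and on $J_l$ it stays of one sign except where $\tau K=-1$, so at $K=0$ one has $f'(\Omega)=1>0$ and the branch passes transversally through the $\cos(\Omega\tau)=0$ locus. The transcritical signature comes from the fact that at $K=0$ the sine nonlinearity is at an inflection ($\Omega\tau\in\frac{\pi}{2}+\pi\mathbb{Z}$), so the leading nonlinear term in the reduced equation is quadratic, producing two solution branches that exchange stability—the hallmark of a transcritical rather than a saddle-node.

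\medskip

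\noindent\textbf{Fold bifurcation at $\tau K=-1$.} For the second condition I would work directly with the defining equation $f(\Omega,\tau)=0$. A fold (saddle-node) in the branch $\Omega(\tau)$ occurs exactly where $\partial f/\partial\Omega=1+\kappa\tau\cos(\Omega\tau)=1+\tau K=0$, which is precisely $\tau K=-1$; this is where $d\tau/d\Omega=0$ and the branch folds back, consistent with the turning points visible in Fig.~\ref{fig:PerSol}(b). To confirm a genuine fold and rule out degeneracy, I would check the standard nondegeneracy conditions: transversality $\partial f/\partial\tau\neq 0$ and nonvanishing second derivative $\partial^2 f/\partial\Omega^2=-\kappa\tau^2\sin(\Omega\tau)\neq 0$. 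The condition $\partial^2 f/\partial\Omega^2\neq 0$ fails only when $\sin(\Omega\tau)=0$, and combining $\sin(\Omega\tau)=0$ with $\cos(\Omega\tau)=-1/(\kappa\tau)$ (from $\tau K=-1$) with the solution constraint $\Omega=\omega-\kappa\sin(\Omega\tau)=\omega$ forces $\cos(\omega\tau)=\pm 1$ and $\kappa\tau=1$, leading after substitution to the excluded resonance $\omega=\pi l\kappa$ with $l$ odd. This is exactly the hypothesis $\omega\neq \pi l\kappa$ for $l\in 2\mathbb{Z}+1$, so under that assumption the fold is nondegenerate.

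\medskip

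\noindent The step I expect to be the main obstacle is rigorously passing from the \emph{equilibrium} bifurcation of the scalar frequency equation $f(\Omega,\tau)=0$ to the \emph{dynamical} bifurcation of periodic orbits of the delay system, i.e.\ justifying that the double zero $\mu=0$ of the characteristic function genuinely produces a transcritical/fold of the periodic solutions with the correct exchange/disappearance of stability. The clean way to handle this is a Lyapunov--Schmidt reduction on the delay equation around $\mu=0$, showing that the reduced bifurcation equation coincides (to leading order) with $f(\Omega,\tau)=0$, so that the quadratic-versus-turning-point distinction in the scalar equation transfers directly to the normal forms for the periodic orbits. I would therefore spend most of the proof verifying the center-manifold/reduction step and the nondegeneracy conditions, since the algebraic identification of the two loci $K=0$ and $\tau K=-1$ is immediate from $F'(0)=0$.
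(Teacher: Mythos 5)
Your proposal follows the same core route as the paper's proof: both identify the candidate loci from $F'(0)=-NK^{N-1}\left(K\tau+1\right)=0$, characterize the fold as the tangency (turning point) of the scalar frequency equation (\ref{eq:meanFrequency}), i.e. $1+\tau K=0$, and conclude that $K=0$ must be transcritical because no synchronous solutions are created or destroyed there. Beyond that, your version is more complete in one respect: your nondegeneracy check $\partial^{2}f/\partial\Omega^{2}=-\kappa\tau^{2}\sin\left(\Omega\tau\right)\neq0$, together with the computation showing it can fail at a fold point only when $\kappa\tau=1$ and $\omega=\pi l\kappa$ with $l$ odd, is precisely the origin of the hypothesis $\omega\neq\pi l\kappa$ --- the paper's proof never explains where that hypothesis enters. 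You also explicitly flag the step the paper silently skips, namely transferring the bifurcation of the scalar equation $f\left(\Omega,\tau\right)=0$ to a bifurcation of periodic orbits of the delay system. Two slips worth fixing: at $\Omega\tau\in\frac{\pi}{2}+\pi\mathbb{Z}$ the sine is at an \emph{extremum}, not an inflection (the inflection case $\sin\left(\Omega\tau\right)=0$ is exactly the degenerate tangency excluded by the hypothesis), although your conclusion that the leading nonlinearity is quadratic is correct; and at $K=0$ the characteristic function reduces to $F\left(\mu\right)=-\mu^{N}$, so $\mu=0$ has multiplicity $N$ --- all $N-1$ transverse factors in (\ref{eq:CharEq2}) cross zero simultaneously, not just one. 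Hence the $K=0$ bifurcation is an $N$-fold degenerate, equivariant crossing (the branches crossing the synchronous one are non-synchronous phase-locked states), a degeneracy that both you and the paper gloss over when calling it simply ``transcritical.''
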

\begin{proof}
As it was mentioned above, under conditions (\ref{eq:perbif}) an
eigenvalue crosses the imaginary axis with vanishing imaginary part.
So the observed bifurcations can either be transcritical or fold.
In a fold bifurcation two new solutions emerge. This happens exactly
when the right hand side in (\ref{eq:meanFrequency}) crosses the
identity (see Fig.~\ref{fig:PerSol}(a)), which is equivalent to
\begin{eqnarray*}
1 & = & \frac{\partial}{\partial\Omega}\left[\omega-\kappa\sin\left(\Omega\tau\right)\right]\\
 & = & -\tau\kappa\cos\left(\Omega\tau\right)=-\tau K
\end{eqnarray*}
This condition also shows that at $K=0$ no other solution emerges,
therefore, the bifurcation at $K=0$ is transcritical. A straightforward
calculation shows that the eigenvalues cross the imaginary axis with
nonzero speed indeed.\\

\end{proof}
Fig.~\ref{fig:BifDiag} shows the obtained bifurcation diagram for
synchronous solutions, including the stability information. Here we
used that for $\tau=0$ we have $K=\kappa>0$, so the synchronous
state is stable.

\begin{figure}[h]
\includegraphics[width=3.5in]{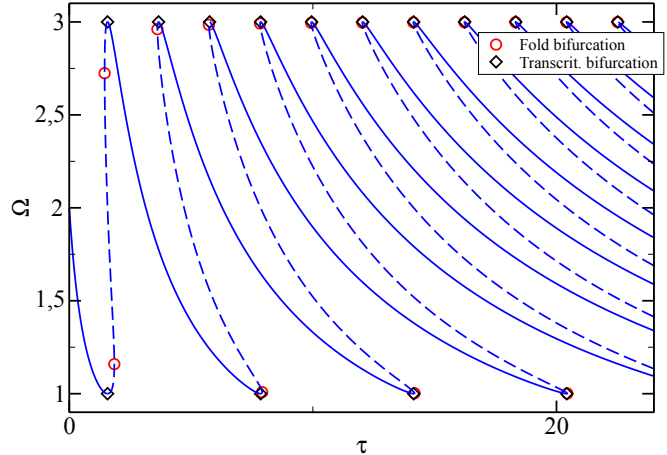}\caption{\label{fig:BifDiag}Bifurcation diagram for synchronous solutions
(\ref{eq:PerSol}). Frequency $\Omega$ is plotted versus time delay
$\tau$. Dotted lines represent unstable and plain lines stable solutions.
Parameters: $\omega=2$, $\kappa=1$. With increasing $\tau$, the
two bifurcations, transcritical and fold, are converging to each other. }
\end{figure}
We remark that, requiring $\sum_{j}a_{ij}=R\,,\,\forall i$ and appropriate
conditions on $f$, the above analysis can be done for the more general
system
\[
\dot{x}_{j}\left(t\right)=\omega+\frac{\kappa}{R}\sum_{j=1}^{N}a_{ij}f\left(x_{j}\left(t-\tau\right)-x_{i}\left(t\right)\right)
\]
with similar results. For the linear stability see \cite{Earl2003}.

\section{Pattern Recognition\label{sec:The-PR-Mechanism}}

In this section we present the idea of how pattern recognition can
be realized with the ring of delay coupled oscillators. We have seen
in the previous section that system (\ref{eq:PhaseOscillCont}) admits
coexisting synchronous solutions $x_{j}^{i}\left(t\right)=\Omega_{i}t$
with frequencies $\left\{ \Omega_{i}\right\} _{i\in I}\subset\left(\omega-\kappa,\omega+\kappa\right)$
given by equation (\ref{eq:meanFrequency}). In analogy with the firing
times of neuronal systems \cite{Goel2002,Canavier2010} we pay special
attention to the moments when an oscillator $x_{j}^{i}\left(t\right)$
reaches the boundary of the periodic domain $\left[0,2\pi\right]$,
i.e. $x_{j}\left(t_{j}\right)=2\pi$. This choice of phase is arbitrary,
and any other phase $\varphi_{*}$ instead of $2\pi$ can be chosen
instead \cite{Popovych(accepted)}. In what follows, we will use the
obtained crossing time sequences $t_{j}$ to represent patterns. In
section \ref{sub:Encoding} we explain how a given pattern can be
encoded in a ring of phase oscillators, see Fig.~\ref{fig:PR_Diagram}.
Afterwards, in section \ref{sub:Recognition} we explain the dynamic
recognition process.
\begin{figure}[h]
\includegraphics[width=3.5in]{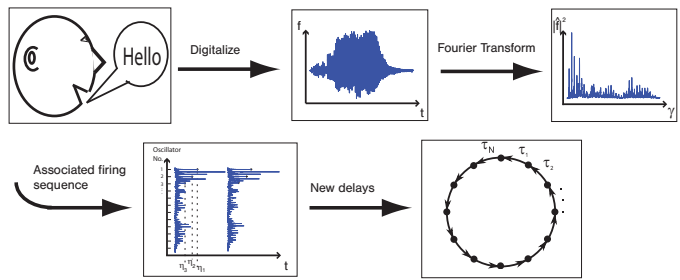}\caption{\label{fig:PR_Diagram}The process chart visualizes the encoding mechanism.
Step one is the recording and digitalization of a given stimulus.
The second step is not mandatory for pattern encoding. However, we
employ it in this article as it yields better results. It consists
of a fast Fourier transformation (FFT) of the recorded stimulus. Then,
a firing sequence $P$ is associated to the Fourier transform such
that each value of the FFT's square modulus is identified with the
firing time of an element of the ring. The last step is calculating
the coupling delays in the ring according to (\ref{eq:New_Delays}),
so the resulting dynamical system admits the firing sequence $P$
as a stable solution.}
\end{figure}

\subsection{Encoding\label{sub:Encoding}}

The idea for encoding a pattern in a ring of delay coupled oscillators
is based on a time shift transformation \cite{Yanchuk2011,Popovych2011}.
We shortly describe it here. We assume that a pattern $P$ is represented
by some $N$-dimensional vector $P=\left(p_{1},p_{2},...,p_{N}\right)\in\mathbb{R}^{N}$.
First, let us choose an $\Omega_{i}$ that corresponds to a stable
synchronous solution of (\ref{eq:PhaseOscillCont}). The timeshift
transformation 
\begin{equation}
y_{j}\left(t\right)=x_{j}\left(t-p_{j}\right)\label{eq:time_shift}
\end{equation}
converts (\ref{eq:PhaseOscillCont}) to the system 
\begin{equation}
\dot{y}_{j}\left(t\right)=\omega+\kappa\sin\left(y_{j+1}\left(t-\tau_{j}\right)-y_{j}\left(t\right)\right),\label{eq:Transformed_oscillators}
\end{equation}
which has the same form as (\ref{eq:PhaseOscillCont}) except that
the new delays 
\begin{equation}
\tau_{j}:=\tau-\left(p_{j+1}-p_{j}\right)\label{eq:New_Delays}
\end{equation}
 are non-identical. Here we assume that $\tau_{j}\ge0$, which is
always possible by taking $\tau$ sufficiently large. Under the transformation
(\ref{eq:time_shift}), the synchronous state $x_{j}\left(t\right)=\Omega_{i}t$
transforms into the solution
\[
y_{j}\left(t\right)=\Omega_{i}\cdot\left(t-p_{j}\right)
\]
of system (\ref{eq:Transformed_oscillators}). The stability properties
of this solution are also preserved \cite{Luecken2013b}. Thus, in
the system with non-homogeneous delays (\ref{eq:Transformed_oscillators}),
an oscillator $j$ crosses the phase $2\pi$ at times $p_{j},p_{j}+\frac{2\pi}{\Omega_{i}},...$.
Consequently, changing the delays $\tau\mapsto\tau_{j}$ yields the
new system system (\ref{eq:Transformed_oscillators}) that exhibits
exactly the required crossing sequence $P$ as a stable periodic solution.
In particular, the encoding is not a dynamic process but just consists
of the calculation of the $N$ delays $\tau_{j}$.

\subsection{Recognition\label{sub:Recognition}}

Now suppose $P$ is encoded as described above. Let 
\[
Q=P+\left(\delta_{1},\delta_{2},...,\delta_{N}\right)
\]
be a perturbed version of pattern $P$ that is to be recognized. We
associate an initial function for system (\ref{eq:Transformed_oscillators})
in a natural manner
\[
y^{0}\left(t\right)=\Omega_{i}\cdot\left(t-q_{1},t-q_{2},....,t-q_{N}\right)
\]
So for $\delta_{i}=0$ it corresponds to the synchronous state $\Omega_{i}t$
itself in transformed coordinates. Then, the recognition task consists
of deciding whether $Q$ is recognized as the previously encoded pattern
$P$. In order to do so we start system (\ref{eq:Transformed_oscillators})
with the initial function $y^{0}$ associated to $Q$ and stop after
a fixed time $T$. Then some measure of similarity between $\Phi_{T}\left(Q\right)$,
which is $Q$ evolved under the flow of (\ref{eq:Transformed_oscillators}),
and $P$ is considered to decide whether $Q$ should be recognized
as $P$. To specify the measure of similarity employed here, consider
the following straightforward calculation. Transformed back in original
coordinates the initial function $y^{0}$ reads 
\begin{eqnarray*}
x_{j}^{0}\left(t\right) & = & y_{j}^{0}\left(t+p_{j}\right)\\
 & = & \Omega_{i}\left(t+p_{j}-q_{j}\right)\\
 & = & \Omega_{i}\left(t-\delta_{j}\right).
\end{eqnarray*}
This means that solving system (\ref{eq:Transformed_oscillators})
with initial function $y^{0}$ (corresponding to $Q$) is equivalent
to solving the original system (\ref{eq:PhaseOscillCont}) with initial
function $x_{j}^{0}\left(t\right)$ (corresponding to $\Delta=\left(\delta_{1},\delta_{2},...,\delta_{N}\right)$).
In other words, pattern $Q$ is recognized as version of $P$ if the
orbit with initial function $x_{j}^{0}\left(t\right)$ corresponding
to $Q$ converges fast enough to the synchronous state with frequency
$\Omega_{i}$ (where {}``fast enough'' has to be quantified in a
specific situation). Consequently, we can use the order parameter
$r\left(t\right)=\sum_{j=1}^{N}\left|e^{i\Phi_{t}\left(x^{0}\right)}\right|$
as measure of similarity (reference and/or explain here?). It is worth
stressing the fact that the essential parameters for the pattern recognition
are the time $T$ after which we stop simulation of the differential
equations and a threshold for the order parameter that lets us decide
whether the flow after time $T$ is close enough to the synchronous
state, that is whether the pattern is recognized or not. The choice
of these parameters constitutes the learning phase.

\section{Numerical Results\label{sec:Numerical-Results}}

In this section we present numerical results for pattern recognition.
An important quantity for the mechanism is the size and shape of the
basin of attraction of the stable periodic solution representing the
encoded pattern, as the basin contains initial conditions (patterns)
that are possibly identified with the encoded pattern (see Fig. \ref{fig:Simple-pattern-perturbation2}
(b)). Obviously, we can not present a complete picture of the basins
of attraction as the phase space is infinite dimensional. However,
we will address some aspects of this question numerically in the following
sections.

\subsection{Recognition of artificial patterns}

We start with a simple pattern $P=\left(2,1,...,1\right)$. Fig. \ref{fig:Simple-pattern-perturbation2}
shows the order parameter after time $T=90$ for initial values $P+Q_{j}$,
where $Q_{j}=\left(0,...,0,\underbrace{\varepsilon}_{j},0,...,0\right)$.
Here $j$ determines the location and $\varepsilon$ the size of the
perturbation. We observe that, from a certain index $j$ on, initial
values $P+Q_{j}$ converge to the pattern with the same speed.

\begin{figure}[h]
\includegraphics[width=1.8in]{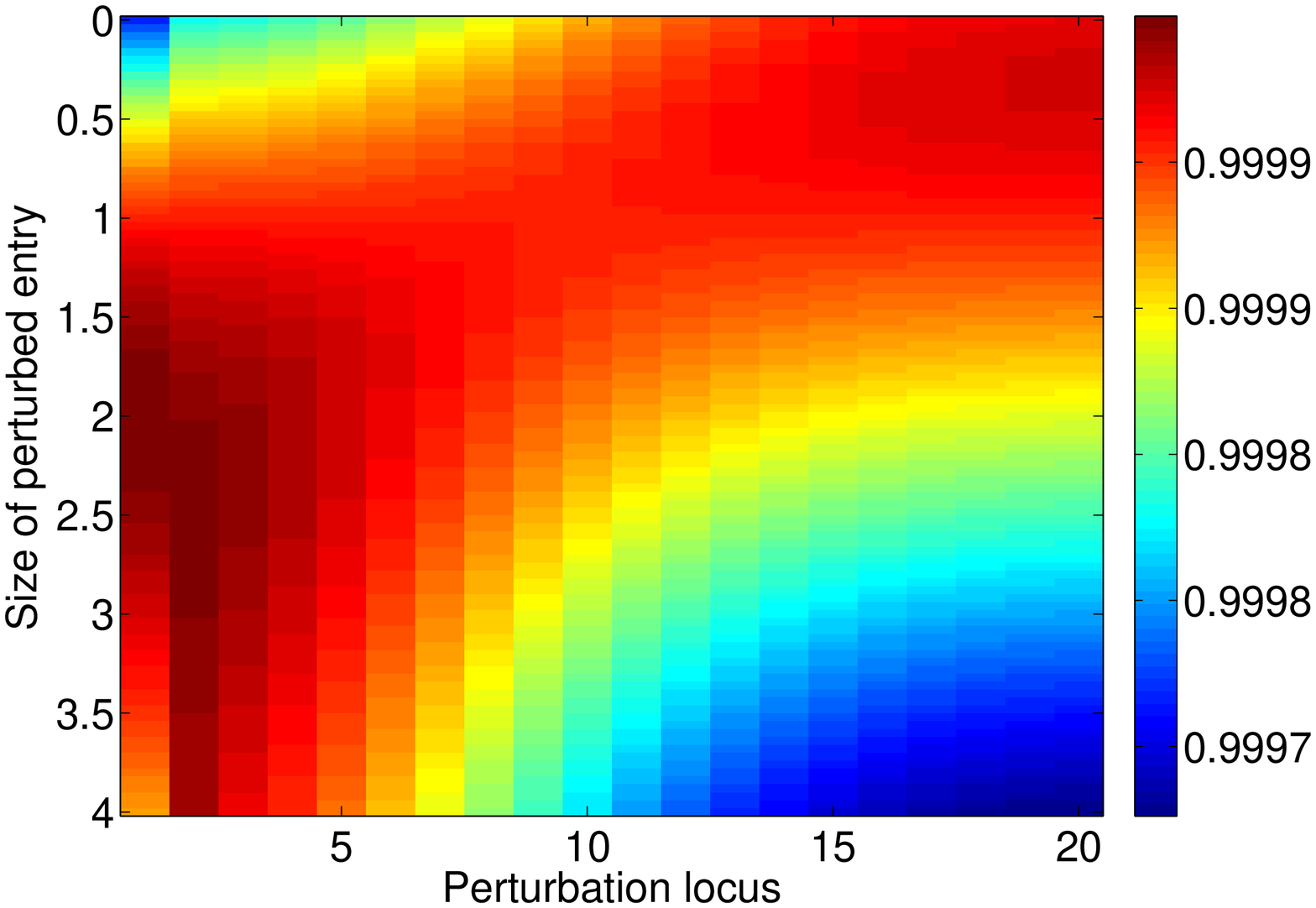}\includegraphics[width=1.8in]{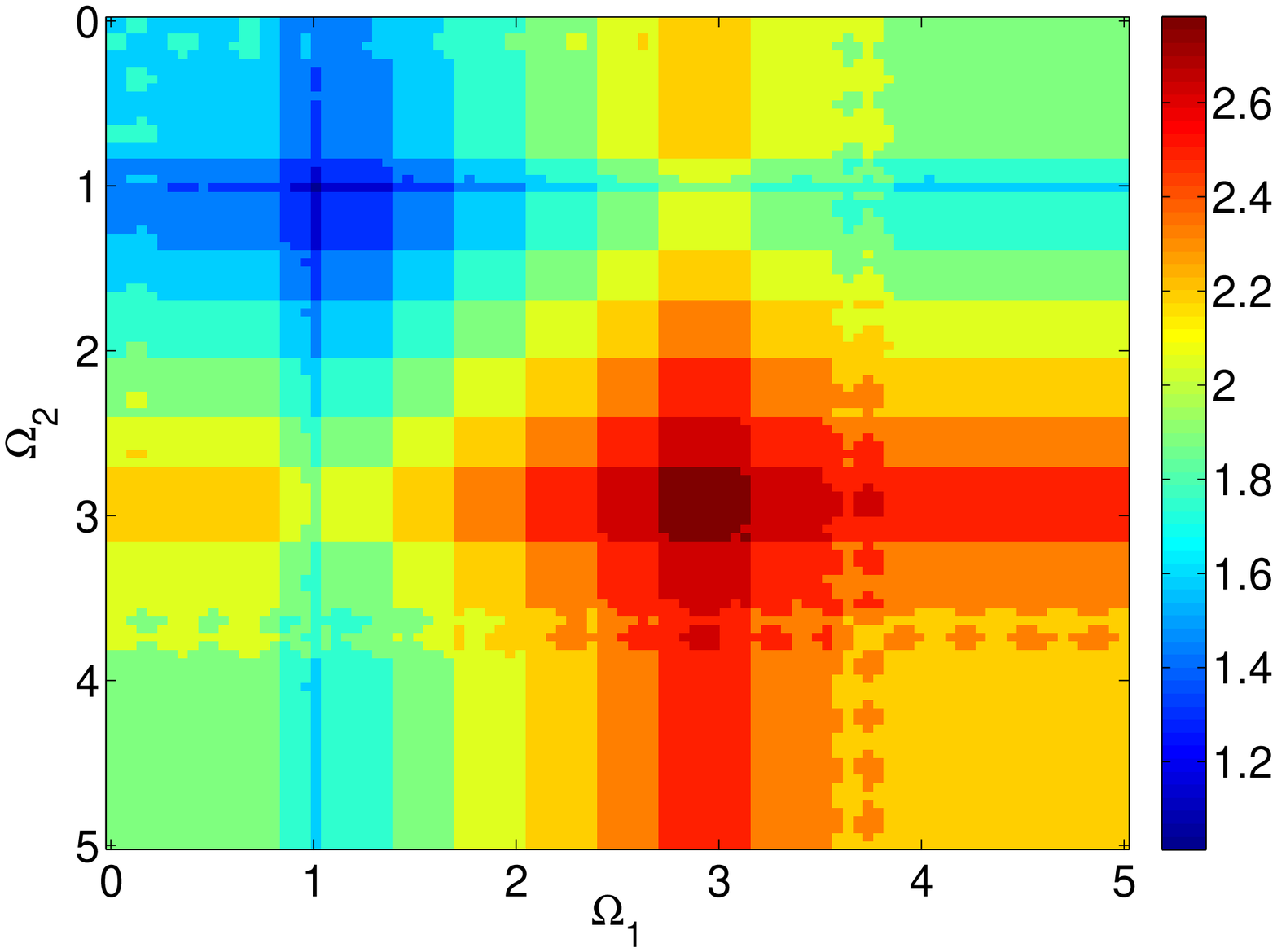}\caption{\label{fig:Simple-pattern-perturbation2}
(a) Encoded pattern $\left(2,1,...,1\right)$. Initial values are
$P+Q_{j}$ with $Q_{j}=\left(0,...,0,\protect\underbrace{\varepsilon}_{j},0,...,0\right)$,
where $j$ determines the location and $\varepsilon$ the size of
the perturbation. Color codes the order parameter measured after time
$T=900$. Parameters are $\tau=3,$ $\omega=1,$ $\kappa=1$. (b)
Visualization of attractor basins for two oscillators. The axis correspond
to initial values $\left[\Omega_{1}t,\Omega_{2}t\right]$. Color codes
the frequency of synchronous motion at time $T=50000$. Parameters
are $\omega=2$, $\kappa=1$ and $\tau=20$. For this value of $\tau$
we observe thirteen coexisting synchronous states of which seven are
stable which can be seen in Fig. \ref{fig:BifDiag}. }
\end{figure}

As a preparation for more complex audio signals we next investigate
recognition of simple sine waves. The task is the following: Suppose
two sine waves of different frequencies are encoded as two patterns
$P_{1}$ and $P_{2}$. It is to decide whether a given third wave
$Q$ is recognized as either one of the two encoded sines. Here, we
proceed as described in the flow chart Fig. \ref{fig:PR_Diagram}.
According to our scheme, the corresponding Fourier transforms $P_{1}$
and $P_{2}$ are the pattern associated with the signals. The use
of Fourier transform has the following advantages: one can cut off
the transformed signal above a certain frequency which can be seen
as a simple denoising process and at the same time it accelerates
the recognition process. Furthermore, for short enough time windows
the time course of the signal is not important and the Fourier transform
contains the main information independent of the signal's exact timing.
The pattern associated to a sine wave of a fixed frequency $f$ is
a delta peak $\delta_{f}$ (in an ideal case without noise). It is
evident that the error tolerance does not depend on the absolute value
of the frequency $f$ but rather on the frequency difference of the
perturbation and the pattern to be identified. 

Fig.~\ref{fig:Pattern_recog_sine} shows the results of two different
recognition processes. In the first recognition (a) two sine waves
of 5Hz and 15Hz are encoded. Again, the recognition process consists
of deciding whether a third given sine wave is recognized as either
one. It can be seen in the lower left plot that the 5Hz and the 15Hz
wave are recognized with an error tolerance of $\pm$0.1Hz. The same
results hold for recognition of the standard pitch A (440Hz) and one
semitone higher ($\sim$466Hz) (Figure (b)). In both cases the order
parameter is measured after time $T=50$ which corresponds to $\sim$17
times the delay time $\tau=3$. So the recognition is still relatively
fast. 

\begin{figure}[h]
\includegraphics[width=3.5in]{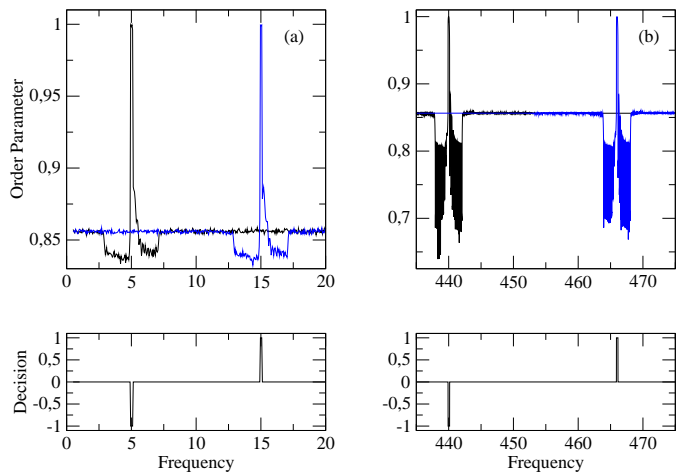}\caption{\label{fig:Pattern_recog_sine}Pattern recognition with two encoded
sine waves of frequencies 5Hz and 15Hz (a) and 440Hz and 466Hz (b).
A third wave which is to be recognized is used as initial value for
the system transformed according to the 5Hz and 15Hz patterns respectively
(see flow chart \ref{fig:PR_Diagram}). Upper plots show order parameters
after time $T=50$ for the different recognition processes. Parameters
are $\tau=3,\,\omega=1,\,\kappa=1$. Lower plots show decision of
the PR device. $-1$ stands for {}``Pattern 1 is chosen'', $1$
for {}``Pattern 2 is chosen'' and $0$ for none.}
\end{figure}

\subsection{Speech Recognition}

In this section, we use the model for speech recognition. As above
for sine waves, the pattern associated to an audio signal is its Fourier
transform. This is done to circumvent that otherwise, two identical
signals with different timing would not be recognized as the same.
With the transformed patterns we proceed in the same way as described
in section \ref{sec:The-PR-Mechanism}. The original audio signals
are the numbers from one to ten, spoken by two different voices (male
and female), each in three takes. So in total it is a set of 60 audio
tracks. First we encoded two randomly chosen different recordings.
For the recognition we randomly chose one of the four remaining recordings
that coincide with one of the two encoded numbers. In the previous
training phase we set the parameters to $\omega=1$, $\kappa=1$,
$\tau=3$, and $T=25$. The resulting recognition process was successful
with a rate of around 75\%.

\section{Conclusion\label{sec:Discussion}}

In this article, we have presented an evidence that a ring of delay
coupled oscillators can be used for information processing purposes.
In contrast to other models, the encoding of patterns is particularly
simple. For a signal of length $N$ it just consists of $N$ point
operations, the calculation of the new delays, in contrast to the
Hopfield model which requires the definition of
$N^{2}$ coupling weights. Furthermore, arbitrary patterns can be
encoded. In section \ref{sec:Synchronous-Motions} we presented an
extensive system's analysis. Using these results the model can be
controlled easily and parameters can be tuned suited to the application
on hand. Particularly, the number of stable periodic orbits can be
controlled and chosen arbitrarily high. Their frequencies, bounded
from below and above by $\omega-\kappa$ and $\omega+\kappa$ respectively,
can be easily determined numerically by solving the transcendent equation
\ref{eq:meanFrequency}. Results from section \ref{sec:Numerical-Results}
show that the model can be used as a reliable device for simple artificial
pattern recognition as well as for single word speech recognition. 

The aim of this article was to introduce main ideas, present some
analytical results for the model, as well as its potential capability
for information processing. Consequently, there are still open questions
and paths for future research. Here, we just mention a few. In section
\ref{sec:Synchronous-Motions} it was shown that for larger values
of the delay $\tau$ the model possesses a large number of coexisting
stable periodic orbits. This fact could possibly be exploited further.
Furthermore, considering Fourier transforms as patterns for speech
recognition was a probable choice, but possibly not the best. It is
an open question which preprocessing procedure suits best for the
model on hand. When classifying a pattern into two classes, we simply
measured the distance of the pattern to the two classes after a fixed
time by calculating the order parameter. This, of course, can be generalized
to more than two classes. Numerical results show that considering
the cross correlation instead of the order parameter yield similar
results. Yet, it is unclear which distance measure is best for a certain
class of signals. Furthermore, one could even think of coupling different
rings representing different classes in some way to yield a single
object representing a composite of classes. More generally the construction
above is still feasible for arbitrary coupling topologies. We only
presented results for artificial and audio signals. Encoding 2-D visual
patterns should be possible using the same ideas for a 2-D lattice.

\end{document}